\documentclass[11pt]{amsart}
\usepackage{fullpage}
\usepackage{color}
\usepackage{pstricks,pst-node,pst-plot} 
\usepackage{graphicx,psfrag} 
\usepackage{color,hyperref}
\usepackage{tikz}
\usepackage{pgffor}
\usepackage{hyperref}

\usepackage{perpage}	 
	
\MakePerPage{footnote}	 

\newtheorem{theorem}{Theorem}[section]
\newtheorem{lemma}[theorem]{Lemma}
\newtheorem{corollary}[theorem]{Corollary}
\newtheorem{proposition}[theorem]{Proposition}

\newtheorem{sublemma}[theorem]{Sublemma}

\theoremstyle{definition}
\newtheorem{definition}[theorem]{Definition}
\newtheorem{example}[theorem]{Example}

\newtheorem{question}[theorem]{Question}

\newtheorem{open}[theorem]{Open Problem List}
\newtheorem{restrict}[theorem]{Restriction}

\theoremstyle{remark}
\newtheorem{remark}[theorem]{Remark}

\numberwithin{equation}{section}

\definecolor{gray}{rgb}{.5,.5,.5}

\definecolor{black}{rgb}{0,0,0}

\definecolor{blue}{rgb}{0,0,1}

\definecolor{red}{rgb}{1,0,0}
\def\red{\color{red}}
\definecolor{green}{rgb}{0,1,0}

\definecolor{yellow}{rgb}{1,1,.4}

\newrgbcolor{purple}{.5 0 .5}
\newrgbcolor{black}{0 0 0}
\newrgbcolor{white}{1 1 1}
\newrgbcolor{gold}{.5 .5 .2}
\newrgbcolor{darkgreen}{0 .5 0}
\newrgbcolor{gray}{.5 .5 .5}
\newrgbcolor{lightgray}{.75 .75 .75}

\begin{document}

\title{A correspondence between complexes and knots}

\author{Moshe Cohen}
\address{Department of Mathematics and Computer Science, Bar-Ilan University, Ramat Gan 52900, Israel}
\email{cohenm10@macs.biu.ac.il}

\date{August 2012}


\begin{abstract}
In \cite{Co:clock} the author investigates perfect matchings of a bipartite graph obtained from a knot diagram and demonstrates that these correspond to discrete Morse functions on a 2-complex for the 2-sphere.  This relationship is expounded below for the opposite audience: those who may be unfamiliar with knots.
\end{abstract}

\maketitle


The intent of this quick note is to make the study of knot theory more accesible to those studying discrete Morse theory and complexes.

\textbf{Acknowledgements.}  This correspondence was developed by the author during a summer school on ``Discrete Morse Theory and Commutative Algebra'' from July 18th through August 1st, 2012 at the Institut Mittag-Leffler (the Royal Swedish Academy of Sciences).  The author would like to thank the organizers Bruno Benedetti and Alexander Engstr\"{o}m as well as the staff of the facility.  \href{http://www.mittag-leffler.se/summer2012/summerschools/}{http://www.mittag-leffler.se/summer2012/summerschools/}

The author was also partially supported by the Oswald Veblen Fund and by the Minerva Foundation of Germany.

\section{Discrete Morse functions on a 2-complex of the 2-sphere}
\label{sec:DMT}

In the discussion below, let $\Delta$ be any 2-complex of the 2-sphere; see Open Problem List \ref{note:open} (1) and (2) for open problems that generalize this to higher genus surfaces or higher dimensions.  Call its 0-, 1-, and 2-dimensional cells \emph{vertices}, \emph{edges}, and \emph{faces}, respectively, as usual.  This complex need not be simplicial; in particular multiple edges between the same pair of vertices, edges whose endpoints are both the same vertex, and faces of length less than three are all allowed.

Let $G$ be the graph given by the 1-skeleton of this complex; it can be realized by a plane embedding, and this is how the graph will be considered.

Let $\Delta^*$ be the \emph{dual block complex} with its 1-skeleton given as the graph $G^*$ realized by a plane embedding dual to the graph $G$.

Let $\mathcal{F}(\Delta)$ be the \emph{face poset} of $\Delta$.  Rather than view this as a usual poset with heights given by the dimension of the cells, take the embedding of the poset given by the plane embedding discussed above.  Call this plane tripartite graph $\widehat{\Gamma}$.

The 2-sphere has two \emph{critical cells}:  one $v_0$ of dimension 0 and one $f_0$ of dimension 2.  In the setting below these must satisfy the following additional requirement.

\begin{restrict}
\label{note:square}
The critical cells $v_0$ and $f_0$ must form a square face in $\widehat{\Gamma}$, the face poset $\mathcal{F}(\Delta)$.
\end{restrict}

See Open Problem List \ref{note:open} (3) for an open problem generalizing this for the setting without this condition.

Delete the critical cells $v_0$ and $f_0$ from $\widehat{\Gamma}$ (along with all incident edges) to obtain a new graph $\Gamma$.  This will be the graph on which perfect matchings will be considered.  In this setting the perfect matchings correspond to discrete Morse functions.

\section{Perfect matchings coming from knots}
\label{sec:knots}

A \emph{knot} $K$ is an embedding of $S^1$ into a three-manifold; here it will always be $S^3$, and the unfamiliar reader may choose to think of this as $\mathbb{R}^3$ with the one-point compactification at infinity sufficiently far from the embedding.  A \emph{link} is the embedding of several circles.

The \emph{projection graph} (or \emph{universe} $U$, following Kauffman's notation in \cite{Kauff}) is the projection of the knot onto the plane such that there are no tangencies and such that any intersection is a double point.  Thus $U$ is a 4-valent graph.

The graph can be transformed into a \emph{knot diagram} $D$ by including over- and under-crossing information at each of these vertices.  These are denoted by broken lines signifying the under-crossing.  The vertices of $U$ are now called \emph{crossings}.  According to a theorem by Reidemeister in 1926 (see for example \cite{Adams}), two knot diagrams represent the same knot if one can be transformed into the other by a sequence of three local changes called \emph{Reidemeister moves}.

Checkerboard color the faces of $U$ using the Jordan Curve Theorem.  Define the plane graph $G$ with vertices coming from the regions of $U$ that are colored black and with edges passing through the vertices of $U$ (i.e. the original crossings), connected using the planarity of the universe.  Define the plane dual $G^*$ similarly with the white regions of the checkerboard coloring.

This graph can be transformed into the often-studied \emph{signed Tait graph}, which by abuse of notation will also be called $G$.  The signs of the edges are determined according to Figure \ref{fig:SignedTaitGraphSignsTIKZ}.

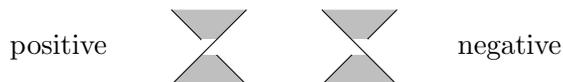
\begin{figure}[h]
\begin{center}

\begin{tikzpicture}
	\fill[gray!50!white] (0,0) -- +(.5,.5) -- +(1,0) -- cycle;
	\fill[gray!50!white] (0,1) -- +(.5,-.5) -- +(1,0) -- cycle;
	\draw[-] (1,0) -- +(-1,1);
	\fill[color=white] (.4,.4) rectangle +(.2,.2);
	\draw[-] (0,0) -- +(1,1);

	\fill[gray!50!white] (2,0) -- +(.5,.5) -- +(1,0) -- cycle;
	\fill[gray!50!white] (2,1) -- +(.5,-.5) -- +(1,0) -- cycle;
	\draw[-] (2,0) -- +(1,1);
	\fill[color=white] (2.4,.4) rectangle +(.2,.2);
	\draw[-] (3,0) -- +(-1,1);

	\draw (-1.5,.5) node {positive};
	\draw (4.5,.5) node {negative};
	
\end{tikzpicture}
	\caption{Crossings determine the sign of the edges in the signed Tait graph.}
	\label{fig:SignedTaitGraphSignsTIKZ}
\end{center}
\end{figure}

There is a bijection between the set of all knot diagrams and the set of all signed plane graphs $G$.  Spanning tree expansions of $G$ have been used to produce several models of use in knot theory (see references in \cite{Co:clock}).  
However not all the elements in these models correspond to combinatorial objects (like the spanning trees).  This leads one to ask for a better model.

The \emph{overlaid Tait graph} $\widehat{\Gamma}$ can be obtained by overlaying $G$ and $G^*$ so that dual edges intersect at the original crossings of the diagram $D$.  This graph is bipartite:  its black vertex set corresponds to the intersections of an edge of $G$ with its dual edge in $G^*$, and its white vertex set corresponds to the vertices of both the Tait graph $G$ and its plane dual $G^*$.  That is, $V(\widehat{\Gamma})=[E(G)\cap E(G^*)]\sqcup [V(G)\sqcup V(G^*)]$.  The edges of this graph are the half-edges of both Tait graphs.  A similar notion is found in work by Huggett, Moffatt, and Virdee \cite{HugVir}.

All of the black vertices of $\widehat{\Gamma}$ are four-valent, as these correspond with vertices of the universe $U$, and all of the faces of $\widehat{\Gamma}$ are square, as these correspond to edges of the universe $U$ with each edge between two vertices and two faces.  Thus this is a quadrangulation of the 2-sphere.  
%
%
%
%
%
Choose one such square face and star the two white vertices.  This condition is equivalent to Restriction \ref{note:square}.

In order to consider perfect matchings, delete the two starred white vertices to obtain the \emph{balanced overlaid Tait graph} $\Gamma$ that is the central graph for the present paper.  For more details of this construction see \cite{Co:jones} 
 or consider the following more succinct definition.

\begin{definition}
The \emph{balanced overlaid Tait graph} $\Gamma$ is a bipartite graph that can be obtained from a universe $U$ as follows.  Let every four-valent vertex in the universe $U$ be a black vertex in $\Gamma$.  Select two adjacent faces of $U$ and mark them $*$ by stars.  Let every non-starred face of $U$ be a white vertex in $\Gamma$.  A black vertex is adjcent to a white vertex whenever the vertex and face of $U$ are incident.
\end{definition}

Since $U$ is a plane graph, so is $\Gamma$.  Furthermore, all faces of $\Gamma$ are square except for the infinite face.  Let the boundary cycle of this infinite face be called the \emph{periphery}.  All black vertices not on the periphery are still four-valent.  
By considering the plane embedding of this graph, we may delete two white vertices and still recover all of $\widehat{\Gamma}$.

There is a another bijection between the set of (rooted) spanning trees (or arborescences) of $G$ and the set of perfect matchings (or dimer coverings) of $\Gamma$ that has been explored in previous work by the author \cite{Co:jones} and with Dasbach and Russell \cite{CoDaRu}, as well as in work by Kenyon, Propp, and Wilson \cite{KenPropp}.  
The graph $\Gamma$ is currently being studied by Kravchenko and Polyak \cite{KraPol} for knots on a torus in relation to cluster algebras.  
Dimers themselves have been studied extensively, as well;  see for example Kenyon's lecture notes \cite{Ken} on the subject.  

%

\section{Formalizing a correspondence between complexes and knots}
\label{sec:corresp}


\begin{proposition}
Discrete Morse functions on the 2-complex $\Delta$ of the 2-sphere correspond to perfect matchings on $\Gamma$ constructed from a knot diagram $D$.
\end{proposition}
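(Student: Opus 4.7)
The plan is to identify the two constructions of $\Gamma$---one from a 2-complex $\Delta$ of the 2-sphere and one from a knot diagram $D$---by producing a bijection between the underlying plane data, then invoking Section \ref{sec:DMT} to transfer the dictionary between perfect matchings and discrete Morse functions. The key observation is that the plane tripartite graph $\widehat{\Gamma}=\mathcal{F}(\Delta)$ of Section \ref{sec:DMT} has the same combinatorial structure as the overlaid Tait graph $\widehat{\Gamma}$ of Section \ref{sec:knots}: both are quadrangulations of $S^2$ with a distinguished 4-valent vertex class.

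First I would verify the quadrangulation claim on the complex side: every 1-cell $e$ of $\Delta$ is 4-valent in $\mathcal{F}(\Delta)$ because it covers its two endpoint 0-cells and is covered by its two incident 2-cells, and each plane face of $\mathcal{F}(\Delta)$ is a 4-cycle $v$--$e$--$f$--$e'$--$v$ encoding a local corner $v\in\partial e\subset \partial f$. This matches the structure on the knot side, where every crossing is 4-valent by construction and each square face of $\widehat{\Gamma}$ corresponds to an edge of $U$ meeting two crossings and two checkerboard regions.

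Next I would exhibit the bijection explicitly. Given $\Delta$, declare its 1-cells to be crossings of a universe $U$, its 0-cells to be the black regions, and its 2-cells to be the white regions; the cyclic order of incident cells around each 1-cell in the plane embedding of $\mathcal{F}(\Delta)$ reproduces the cyclic order of the four strands around the corresponding crossing of $U$. Conversely, given $U$, let the crossings, black regions, and white regions be the 1-, 0-, and 2-cells of a complex $\Delta$, with incidence recorded by the overlay. Both passes are plainly inverse on the level of quadrangulations of $S^2$ carrying the prescribed vertex-type partition. Under this identification, Restriction \ref{note:square} (that $v_0$ and $f_0$ share a square face of $\widehat{\Gamma}$) translates precisely into the condition that the two starred white vertices in the knot construction be adjacent, i.e.\ that two adjacent regions of $U$ are starred. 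Deleting $v_0,f_0$ on the complex side and deleting the starred regions on the knot side therefore produce identical balanced graphs $\Gamma$. The proposition then follows by composing: discrete Morse functions on $\Delta$ correspond to perfect matchings on $\Gamma$ by Section \ref{sec:DMT}, and this $\Gamma$ is the one built from any knot diagram $D$ whose underlying universe is $U$---the over/under information at crossings is irrelevant to $\Gamma$, so the knot-side count is independent of the chosen lift.

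The main obstacle I anticipate is not conceptual but bookkeeping: checking that cyclic orderings around 4-valent vertices agree on both sides so that the two plane embeddings coincide (not merely that the abstract graphs are isomorphic), and confirming that the asymmetry between 0-cells and 2-cells on the complex side matches the asymmetry between black and white regions on the knot side, with the residual ambiguity absorbed by the choice of checkerboard color. Once these compatibilities are settled, the proposition is immediate from Section \ref{sec:DMT}.
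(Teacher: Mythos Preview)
Your proposal is correct and follows essentially the same approach as the paper: identify the 1-skeleton of $\Delta$ with the Tait graph $G$ and the face poset $\mathcal{F}(\Delta)$ with the overlaid Tait graph $\widehat{\Gamma}$, then observe that deleting the critical cells (subject to Restriction \ref{note:square}) matches deleting the two adjacent starred regions. The paper's own proof is a two-line ``by construction'' remark, so your version is a considerably more explicit unpacking of the same identification rather than a different argument.
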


\begin{proof}
This follows by construction with the 1-skeleton of $\Delta$ giving the Tait graph $G$ of the knot diagram and with the face poset $\mathcal{F}(\Delta)$ of the complex giving $\widehat{\Gamma}$.
\end{proof}

This leads to numerous questions translating properties back and forth between the two settings.  In particular, knots are studied and characterized by many invariants, some of which can be found on the Knot Atlas \cite{knotatlas} and on Knot Info \cite{knotinfo}.  See Open Problem List \ref{note:open} (4) and (5).

\begin{open}
\label{note:open}
Finally here are some open questions that are related to this correspondence:
\begin{enumerate}
	\item Extend to $\Delta$ for higher genus surfaces.  For knots on the torus see \cite{KraPol}.
	\item Extend to $\Delta$ in higher dimensions.
	\item Extend to $\Delta$ with critical cells that are not adjacent, that is, without imposing Restriction \ref{note:square}.  This should not be a problem in the discrete Morse setting, but it is problematic for certain topics of interest in the knot setting, specifically the Alexander polynomial (see \cite{CoDaRu}).
	\item Study knot invariants via properties of complexes.
	\item Study properties of complexes via knot invariants.
\end{enumerate}
\end{open}

\bibliographystyle{amsalpha}
\bibliography{12JulyBibliography-old}

\providecommand{\MR}[1]{}
\providecommand{\bysame}{\leavevmode\hbox to3em{\hrulefill}\thinspace}
\providecommand{\MR}{\relax\ifhmode\unskip\space\fi MR }
\providecommand{\MRhref}[2]{%
  \href{http://www.ams.org/mathscinet-getitem?mr=#1}{#2}
}
\providecommand{\href}[2]{#2}
\begin{thebibliography}{KPW00}

\bibitem[Ada04]{Adams}
Colin~C. Adams, \emph{The knot book}, American Mathematical Society,
  Providence, RI, 2004, An elementary introduction to the mathematical theory
  of knots, Revised reprint of the 1994 original.

\bibitem[BNMea]{knotatlas}
Dror Bar-Natan, Scott Morrison, and et~al., \emph{The {K}not {A}tlas},
  http://katlas.org.

\bibitem[CDR12]{CoDaRu}
Moshe Cohen, Oliver~T. Dasbach, and Heather~M. Russell, \emph{A twisted dimer
  model for knots}, Fund. Math. (2012), arXiv:1010.5228.

\bibitem[CL]{knotinfo}
J.~C. Cha and C.~Livingston, \emph{{K}not{I}nfo: {T}able of {K}not
  {I}nvariants}, http://www.indiana.edu/\verb=~=knotinfo.

\bibitem[Coh12]{Co:jones}
Moshe Cohen, \emph{A determinant formula for the {J}ones polynomial of pretzel
  knots}, J. Knot Theory Ramifications \textbf{21} (2012), no.~6,
  arXiv:1011.3661.

\bibitem[CT12]{Co:clock}
Moshe Cohen and Mina Teicher, \emph{Kauffman's clock lattice as a graph of
  perfect matchings: a formula for its height}, in preparation, 2012.

\bibitem[HMV11]{HugVir}
Stephen Huggett, Iain Moffatt, and Natalia~Kaur Virdee, \emph{On the {S}eifert
  graphs of a link diagram and its parallels}, 2011, arXiv:1106.4197v1.

\bibitem[Kau83]{Kauff}
Louis~H. Kauffman, \emph{Formal knot theory}, Mathematical Notes, vol.~30,
  Princeton University Press, Princeton, NJ, 1983.

\bibitem[Ken09]{Ken}
Richard Kenyon, \emph{Lectures on dimers}, Statistical mechanics, IAS/Park City
  Math. Ser., vol.~16, Amer. Math. Soc., Providence, RI, 2009, pp.~191--230.

\bibitem[KP12]{KraPol}
Olga Kravchenko and Michael Polyak, \emph{Knots, dimers, and cluster algebras},
  personal communication, 2012.

\bibitem[KPW00]{KenPropp}
Richard~W. Kenyon, James~G. Propp, and David~B. Wilson, \emph{Trees and
  matchings}, Electron. J. Combin. \textbf{7} (2000), Research Paper 25, 34 pp.
  (electronic).

\end{thebibliography}

\end{document}